\documentclass[11pt]{amsart}
\usepackage{amsmath, amssymb, amscd, amsthm, amsfonts, latexsym}
\usepackage{graphicx}
\usepackage{tikz-cd}
\usepackage{hyperref}
\usepackage{mathtools}
\usepackage{mathrsfs}
\usepackage{enumerate}
\usepackage{upgreek}
\usepackage{mathabx}
\usepackage{setspace}
\usepackage{color}
\usepackage{comment}
\usepackage{tikz}
\usepackage{tikz-cd}

\theoremstyle{plain}

\theoremstyle{definition}

\newcommand{\cd}{\operatorname{cd}}
\newcommand{\hd}{\operatorname{hd}}
\newcommand{\id}{\operatorname{id}}

\newcommand{\Hom}{\operatorname{Hom}}

\newcommand{\Coind}{\operatorname{Coind}}

\title{Cohomological dimension of a Lie algebra homomorphism}
\author{Nursultan Kuanyshov}

\address{Nursultan Kuanyshov, Suleyman Demirel University, Kaskelen, Kazakhstan}
\email{nursultan.kuanyshov@sdu.edu.kz, kuanyshov.nursultan@gmail.com}

\subjclass[2020]{Primary 17B56; Secondary 17B55, 18G10, 20J06.}
\keywords{Cohomological dimension, homological dimension, Lie Algebra, Chevalley-Eilenberg (co)homology}

\begin{document}
\maketitle

\newtheorem{theorem}{Theorem}[section]
\newtheorem{proposition}[theorem]{Proposition}
\newtheorem{lemma}[theorem]{Lemma}
\newtheorem{corollary}[theorem]{Corollary}

\theoremstyle{definition}
\newtheorem{definition}[theorem]{Definition}
\newtheorem{example}[theorem]{Example}
\newtheorem{remark}[theorem]{Remark}

\begin{abstract}
We introduce the notions of the cohomological dimension and the homological dimension of Lie algebra homomorphisms, extending the corresponding invariants for group homomorphisms. We establish their basic properties, including their behavior under monomorphisms, epimorphisms, pullbacks, composition, and restriction to the image. We also obtain a chain homotopy characterization of the cohomological dimension and compute these invariants explicitly for homomorphisms between free and abelian Lie algebras.
\end{abstract}

\section{Introduction}

The cohomological dimension of a group is one of the fundamental numerical invariants in homological algebra and geometric group theory. Introduced by Eilenberg and Ganea and further developed by Stallings \cite{St}, Swan \cite{Sw}, Bieri \cite{Bi}, and Brown \cite{Br}, it measures the largest degree in which the cohomology of a group may be nontrivial. The theory of cohomological dimension has found numerous applications in topology, geometry, and algebra, and comprehensive accounts can be found in the monographs of Bieri \cite{Bi}, Weibel \cite{We} and Brown \cite{Br}.

Recently, Mark Grant \cite{Gr} introduced the notion of the cohomological dimension of a group homomorphism, extending the classical invariant from groups to morphisms. The works of Dranishnikov and his students \cite{DD,DK,Sc} established several analogues of classical results in cohomological dimension theory and demonstrated that the resulting invariant captures important homological information about group homomorphisms. This development has motivated further investigations of homotopy and homological invariants associated with morphisms of groups; see, for example, \cite{DK,Ku1,Ku2,Ku3}.

The purpose of the present paper is to develop an analogous theory for Lie algebra homomorphisms. Since every homomorphism of Lie algebras induces natural morphisms in Chevalley--Eilenberg homology and cohomology, it is natural to ask whether numerical invariants analogous to those for group homomorphisms can be defined and whether they satisfy similar structural properties. 

In this paper we introduce the cohomological dimension $\cd(\phi)$ and the homological dimension $\hd(\phi)$ of a Lie algebra homomorphism
$\phi:\mathfrak g\longrightarrow\mathfrak h,$ defined in terms of the induced maps in Chevalley--Eilenberg cohomology and homology, respectively. We establish their basic properties, including their behavior under composition, restriction to the image, monomorphisms, and epimorphisms. A key ingredient throughout is the Lie algebra version of Shapiro's Lemma, which enables us to prove that the cohomological dimension of an injective homomorphism coincides with the cohomological dimension of its domain and to establish a monotonicity result for pullback diagrams.

We further obtain a homological characterization of the cohomological dimension by showing that if $\cd(\phi)=k$, then the chain map induced by $\phi$ between projective resolutions is chain homotopic to one that vanishes in all degrees greater than $k$. This result is the Lie algebra analogue of the corresponding theorem of De Saha and Dranishnikov \cite{DD}. Finally, we investigate the relationship between the homological and cohomological dimensions of Lie algebra homomorphisms and prove that
$\hd(\phi)\leq \cd(\phi)$.

We hope that the invariants introduced here provide a useful framework for studying homological properties of Lie algebra homomorphisms and stimulate further interactions between Lie algebra cohomology, homological algebra, and the theory of algebraic morphisms.

The paper is organized as follows. Section~2 reviews the necessary background on Lie algebras, universal enveloping algebras, Chevalley--Eilenberg homology and cohomology, and Shapiro's Lemma. In Section~3, we introduce the cohomological and homological dimensions of Lie algebra homomorphisms and establish their elementary properties. Section~4 contains the main results concerning the cohomological dimension, including its behavior under monomorphisms, pullback diagrams, and chain homotopy characterizations. In addition, we investigate the relationship between the homological and cohomological dimensions. Finally, Section~5 provides examples illustrating the theory. In particular, we compute the cohomological dimension for homomorphisms between free Lie algebras and abelian Lie algebras, showing how the introduced invariants can be explicitly determined in important classes of examples.

\section{Preliminaries}

Throughout the paper, all Lie algebras are defined over a field $K$. We assume familiarity with basic facts on Lie algebras, universal enveloping algebras, and Lie algebra (co)homology (see \cite[Chapter I]{Bo}, \cite[Chapter 7]{We}).

\subsection{Universal enveloping algebras}

For a Lie algebra $\mathfrak g$, let $U(\mathfrak g)$ denote its universal enveloping algebra. Every left $\mathfrak g$-module extends uniquely to a left $U(\mathfrak g)$-module, and conversely every left $U(\mathfrak g)$-module determines a $\mathfrak g$-module. Thus the categories of $\mathfrak g$-modules and left $U(\mathfrak g)$-modules are naturally equivalent.

Every Lie algebra homomorphism $\phi:\mathfrak g\longrightarrow\mathfrak h$ induces an algebra homomorphism
$U(\phi):U(\mathfrak g)\longrightarrow U(\mathfrak h),$
which gives rise to the corresponding restriction, induction, and coinduction functors between module categories.

\subsection{Chevalley--Eilenberg (Co)homology}

Let $\mathfrak g$ be a Lie algebra and let $M$ be a $\mathfrak g$-module. We denote by
$C_*(\mathfrak g,M)$ and $C^*(\mathfrak g,M)$ the Chevalley--Eilenberg chain and cochain complexes computing the homology and cohomology of $\mathfrak g$, respectively. Their homology groups are denoted by
$H_n(\mathfrak g,M), H^n(\mathfrak g,M).$

Recall, the (co)homological dimension of a Lie algebra $\mathfrak g$ over a field K is the maximal integer n such that $H_{n}(\mathfrak g, M)\neq 0$ ($H^{n}(\mathfrak g, M)\neq 0$) for some $\mathfrak g$-module M respectively. Analogue of Stallings and Swan theorem in Lie algebra setting is still open if the characteristics of a filed K are zero and $2$. Mikhalev, Umerbaev, and Zolotykh \cite{MUZ} constructed an example of a non-free Lie algebra of cohomological dimension one over a field of charateristic $>2$. Zusmanovich \cite{Zu} gives recent progress in this direction.

\subsection{Shapiro's Lemma}

The proofs of several of our main results rely on the Lie algebra version of Shapiro's Lemma in \cite[Shapiro's Lemma 6.3.2]{We}. 

\begin{theorem}[Shapiro's Lemma]\label{ShapiroLie}
Let $\mathfrak h\subseteq\mathfrak g$ be a Lie subalgebra and let $M$ be an $\mathfrak h$-module. Then
$$H^n\!\left(\mathfrak g,
\operatorname{Coind}_{\mathfrak h}^{\mathfrak g}M \right)\cong H^n(\mathfrak h,M)$$
naturally for every $n\ge0$, where $\operatorname{Coind}_{\mathfrak h}^{\mathfrak g}M
=\operatorname{Hom}_{U(\mathfrak h)}
\!\left(U(\mathfrak g),M
\right).$

$$H_n\!\left(\mathfrak g,U(\mathfrak g)\otimes_{U(\mathfrak h)}M \right)\cong H_n(\mathfrak h,M)$$
naturally for every $n\ge0$.
\end{theorem}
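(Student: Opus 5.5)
We plan to reduce both statements to facts about $\operatorname{Ext}$ and $\operatorname{Tor}$ over universal enveloping algebras, together with two structural properties of the extension $U(\mathfrak h)\subseteq U(\mathfrak g)$. Recall that for any $\mathfrak g$-module $N$ we have
$$H^n(\mathfrak g,N)\cong \operatorname{Ext}^n_{U(\mathfrak g)}(K,N),\qquad H_n(\mathfrak g,N)\cong \operatorname{Tor}_n^{U(\mathfrak g)}(K,N),$$
where $K$ is the trivial module. Both groups can be computed from the Chevalley--Eilenberg resolution $V_*(\mathfrak g)=U(\mathfrak g)\otimes_K\Lambda^*\mathfrak g\to K$, which is a free resolution of $K$ over $U(\mathfrak g)$.

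The key structural input is the Poincar\'e--Birkhoff--Witt theorem. Choose a basis of $\mathfrak h$ and extend it to a basis of $\mathfrak g$. Ordered PBW monomials then show that $U(\mathfrak g)$ is a free module over $U(\mathfrak h)$, both as a left and as a right module. This is the step I expect to require the most care, since one must check that the PBW filtration argument works on both sides; the statement is, however, standard (\cite[Chapter I]{Bo}).

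From freeness we get that restriction $\operatorname{Res}^{\mathfrak g}_{\mathfrak h}$ sends free $U(\mathfrak g)$-modules to free $U(\mathfrak h)$-modules. Hence $V_*(\mathfrak g)$, viewed over $U(\mathfrak h)$, is a projective resolution of $K$ as an $\mathfrak h$-module.

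For cohomology we use the adjunction $\operatorname{Hom}_{U(\mathfrak g)}(P,\operatorname{Hom}_{U(\mathfrak h)}(U(\mathfrak g),M))\cong \operatorname{Hom}_{U(\mathfrak h)}(P,M)$, which is natural in the $U(\mathfrak g)$-module $P$ and in $M$. Apply it termwise with $P=V_*(\mathfrak g)$. It commutes with the differentials, so it gives an isomorphism of cochain complexes
$$\operatorname{Hom}_{U(\mathfrak g)}(V_*(\mathfrak g),\operatorname{Coind}_{\mathfrak h}^{\mathfrak g}M)\cong \operatorname{Hom}_{U(\mathfrak h)}(V_*(\mathfrak g),M).$$
The left side computes $H^n(\mathfrak g,\operatorname{Coind}M)$. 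Since $V_*(\mathfrak g)$ is a projective $U(\mathfrak h)$-resolution of $K$, the right side computes $\operatorname{Ext}^n_{U(\mathfrak h)}(K,M)=H^n(\mathfrak h,M)$. Naturality in $M$ follows from the naturality of the adjunction, and the identification with $\operatorname{Ext}$ is independent of the chosen resolution up to canonical isomorphism.

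For homology we use the dual isomorphism of complexes
$$K\otimes_{U(\mathfrak g)}\bigl(U(\mathfrak g)\otimes_{U(\mathfrak h)}M\bigr)\;\cong\; K\otimes_{U(\mathfrak h)} M,$$
applied with the resolution tensored appropriately. Concretely, $V_*(\mathfrak g)\otimes_{U(\mathfrak g)}(U(\mathfrak g)\otimes_{U(\mathfrak h)}M)\cong V_*(\mathfrak g)\otimes_{U(\mathfrak h)}M$, after converting $V_*(\mathfrak g)$ to a right-module resolution via the antipode. Here we need the right-module version of PBW freeness, so that the resolution remains projective over $U(\mathfrak h)$ on the correct side.

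Alternatively, one can argue that $U(\mathfrak g)\otimes_{U(\mathfrak h)}-$ is exact, by flatness, and sends projective modules to projective modules. Then
$$\operatorname{Tor}^{U(\mathfrak g)}_n\bigl(K,U(\mathfrak g)\otimes_{U(\mathfrak h)}M\bigr)\cong \operatorname{Tor}^{U(\mathfrak h)}_n(K,M),$$
by resolving $M$ over $U(\mathfrak h)$ and inducing the resolution up to $U(\mathfrak g)$.

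Both routes use only PBW freeness and the Hom-tensor adjunctions, and naturality is automatic from functoriality.
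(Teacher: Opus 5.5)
The paper gives no proof here; it simply cites the standard result (Weibel's Shapiro's Lemma), and your argument is correct and is essentially that standard proof: $U(\mathfrak g)$ is free over $U(\mathfrak h)$ on each side by PBW, so a free $U(\mathfrak g)$-resolution of $K$ restricts to a projective $U(\mathfrak h)$-resolution, and you then use the restriction/coinduction adjunction for cohomology and tensor associativity (or exactness of induction) for homology. The two-sided freeness you flag as delicate is in fact immediate: PBW holds for any ordering of the basis, so you can place the complementary basis vectors after (respectively before) those of $\mathfrak h$, or equivalently pass between sides using the anti-automorphism of $U(\mathfrak g)$ induced by $x\mapsto -x$.
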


\section{Cohomological and homological dimensions of Lie algebra homomorphisms}

The functoriality of Chevalley--Eilenberg cohomology naturally leads to an analogue of the cohomological dimension of group homomorphisms in the setting of Lie algebras. Indeed, if $\phi:\mathfrak g\longrightarrow\mathfrak h$ is a Lie algebra homomorphism and $M$ is an $\mathfrak h$-module, then restriction of scalars along $\phi$ induces homomorphisms
$\phi^*:H^n(\mathfrak h,M)\longrightarrow H^n(\mathfrak g,M),$ $n\ge0.$

This motivates the following definition.
\begin{definition}
Let $\phi:\mathfrak g\longrightarrow\mathfrak h$ be a Lie algebra homomorphism. The \emph{cohomological dimension} of $\phi$ is
$$\cd(\phi)=\sup\left\{n\ge0:\exists\;\mathfrak h\text{-module }M\text{ with }
\phi^*:H^n(\mathfrak h,M)\to H^n(\mathfrak g,M)\neq0\right\}.$$
\end{definition}

The definition immediately yields the following basic properties.

\begin{proposition}\label{basiccd}
Let $\mathfrak g\stackrel{\phi}{\longrightarrow}\mathfrak h\stackrel{\psi}{\longrightarrow}\mathfrak l$
be Lie algebra homomorphisms.

\begin{enumerate}
\item $\cd(\phi)\le\min\{\cd(\mathfrak g),\cd(\mathfrak h)\}.$

\item $\cd(\id_{\mathfrak g})=\cd(\mathfrak g).$

\item $\cd(\psi\circ\phi)\le\min\{\cd(\phi),\cd(\psi)\}.$
\end{enumerate}
\end{proposition}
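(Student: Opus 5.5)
All three parts follow directly from the definition of $\cd(\phi)$ and the functoriality of Chevalley--Eilenberg cohomology under restriction of scalars. The basic observation is that a nonzero linear map has nonzero source and nonzero target. So if $\phi^*:H^n(\mathfrak h,M)\to H^n(\mathfrak g,M)$ is nonzero for some $\mathfrak h$-module $M$, then $H^n(\mathfrak h,M)\neq0$. Likewise $H^n(\mathfrak g,\phi^*M)\neq 0$, where $\phi^*M$ denotes $M$ regarded as a $\mathfrak g$-module via $U(\phi)$.

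For (1), fix $n$ and $M$ with $\phi^*\neq 0$ in degree $n$. Nonvanishing of the source gives $n\le\cd(\mathfrak h)$. Nonvanishing of the target, computed with coefficients in the $\mathfrak g$-module $\phi^*M$, gives $n\le\cd(\mathfrak g)$. Taking the supremum over all such $n$ yields $\cd(\phi)\le\min\{\cd(\mathfrak g),\cd(\mathfrak h)\}$. The same argument covers the case where $\phi^*$ vanishes in every degree, using the convention for the supremum of the empty set.

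For (2), the induced map $\id^*:H^n(\mathfrak g,M)\to H^n(\mathfrak g,M)$ is the identity map. It is therefore nonzero exactly when $H^n(\mathfrak g,M)\neq 0$. Hence the set defining $\cd(\id_{\mathfrak g})$ coincides with the set defining $\cd(\mathfrak g)$, and the two suprema agree.

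For (3), the key step is functoriality: for an $\mathfrak l$-module $M$,
$$(\psi\circ\phi)^*=\phi^*\circ\psi^*:H^n(\mathfrak l,M)\to H^n(\mathfrak h,\psi^*M)\to H^n(\mathfrak g,(\psi\phi)^*M).$$
This holds because the cochain-level maps of Chevalley--Eilenberg complexes, $f\mapsto f\circ\Lambda^n\phi$, compose contravariantly. It also uses the identity $(\psi\phi)^*M=\phi^*(\psi^*M)$, which holds since $U(\psi\circ\phi)=U(\psi)\circ U(\phi)$.

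Now suppose $(\psi\circ\phi)^*\neq0$ in degree $n$ for some $\mathfrak l$-module $M$. Then $\psi^*:H^n(\mathfrak l,M)\to H^n(\mathfrak h,M)$ is nonzero, which gives $n\le\cd(\psi)$. Also $\phi^*$ is nonzero with coefficients in the $\mathfrak h$-module $\psi^*M$, which gives $n\le\cd(\phi)$. Taking the supremum over $n$ proves (3).

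The only point needing care is this functoriality identity and the bookkeeping of which module serves as the coefficient module at each stage. The rest is formal.
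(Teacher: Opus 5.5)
Your proof is correct and follows the same approach as the paper. Part (1) uses nonvanishing of the source and target of a nonzero map, part (2) uses that the induced map is the identity, and part (3) uses the functoriality $(\psi\circ\phi)^*=\phi^*\circ\psi^*$, with your explicit use of $\psi^*M$ as the $\mathfrak h$-module witnessing $n\le\cd(\phi)$ spelling out what the paper leaves implicit (your empty-supremum remark is unnecessary, since degree $0$ with trivial coefficients $K$ always gives a nonzero map).
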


\begin{proof}
The first statement follows from the vanishing of
$H^n(\mathfrak g,-)$ and $H^n(\mathfrak h,-)$ above their respective cohomological dimensions. The second is immediate since the induced map on cohomology is the identity. Finally,
$
(\psi\circ\phi)^*
=
\phi^*\circ\psi^*,
$
so nontriviality of $(\psi\circ\phi)^*$ in degree $n$ implies that both $\phi^*$ and $\psi^*$ are nontrivial in degree $n$.
\end{proof}

The homological counterpart is defined similarly.

\begin{definition}
Let $\phi:\mathfrak g\longrightarrow\mathfrak h$ be a Lie algebra homomorphism. The \emph{homological dimension} of $\phi$ is
$$\hd(\phi)=\sup\left\{n\ge0:\exists\;\mathfrak h\text{-module }M\text{ with }\phi_*:H_n(\mathfrak g,M)\to H_n(\mathfrak h,M)\neq0\right\}.$$
\end{definition}

Its elementary properties are completely analogous.

\begin{proposition}\label{basichd}
Let $\mathfrak g\stackrel{\phi}{\longrightarrow}\mathfrak h\stackrel{\psi}{\longrightarrow}\mathfrak l$ be Lie algebra homomorphisms.

\begin{enumerate}
\item $\hd(\phi)\le\min\{\hd(\mathfrak g),\hd(\mathfrak h)\}.$

\item $\hd(\id_{\mathfrak g})=\hd(\mathfrak g).$

\item $\hd(\psi\circ\phi) \le \min\{\hd(\phi),\hd(\psi)\}.$
\end{enumerate}
\end{proposition}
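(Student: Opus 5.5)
The plan is to mirror the proof of Proposition~\ref{basiccd} and argue directly from the definition of $\hd(\phi)$ and the functoriality of Chevalley--Eilenberg homology. Here $\hd(\mathfrak g)$ is the largest $n$ with $H_n(\mathfrak g,M)\neq 0$ for some $\mathfrak g$-module $M$, as recalled in the preliminaries. All three parts rest on one observation: a homomorphism of $K$-vector spaces $f:A\to B$ is nonzero only if both $A\neq 0$ and $B\neq 0$.

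For (1), suppose $\phi_*:H_n(\mathfrak g,M)\to H_n(\mathfrak h,M)$ is nonzero for some $\mathfrak h$-module $M$, where $\mathfrak g$ acts on $M$ through $\phi$. Then $H_n(\mathfrak h,M)\neq0$, which gives $n\le\hd(\mathfrak h)$. Also $H_n(\mathfrak g,\phi^*M)\neq0$ for the $\mathfrak g$-module obtained by restriction along $\phi$, which gives $n\le\hd(\mathfrak g)$. Taking the supremum over such $n$ yields $\hd(\phi)\le\min\{\hd(\mathfrak g),\hd(\mathfrak h)\}$. If the defining set is empty, the supremum should be read as $0$ (or $-\infty$); this is harmless provided the convention matches the one for $\cd$.

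For (2), the induced map $(\id_{\mathfrak g})_*$ is the identity on $H_n(\mathfrak g,M)$ for every $\mathfrak g$-module $M$, and every $\mathfrak g$-module is an admissible coefficient module. The identity map is nonzero exactly when $H_n(\mathfrak g,M)\neq 0$, so the two suprema coincide.

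For (3), let $M$ be an $\mathfrak l$-module. Functoriality of Chevalley--Eilenberg chains under $\Lambda^*$ of the Lie maps gives $(\psi\circ\phi)_*=\psi_*\circ\phi_*$ as maps
\[
H_n(\mathfrak g,M)\to H_n(\mathfrak h,M)\to H_n(\mathfrak l,M),
\]
where $M$ is regarded as an $\mathfrak h$-module via $\psi$ and as a $\mathfrak g$-module via $\psi\circ\phi$. These two $\mathfrak g$-module structures agree: the restriction along $\phi$ of the restriction along $\psi$ is the restriction along $\psi\circ\phi$.

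If the composite is nonzero in degree $n$, then $\psi_*$ is nonzero with coefficients in the $\mathfrak l$-module $M$, so $n\le\hd(\psi)$. Likewise $\phi_*$ is nonzero with coefficients in the $\mathfrak h$-module $\psi^*M$, so $n\le\hd(\phi)$.

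No step is a real obstacle. The only point needing care is this bookkeeping of coefficient modules: the definition of $\hd(\phi)$ allows any $\mathfrak h$-module, and $\psi^*M$ is one.
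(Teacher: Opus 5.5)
Your proof is correct and follows the same route as the paper. The paper simply transfers the argument of Proposition~\ref{basiccd} to homology: a nonzero induced map forces a nonzero source and target for (1), the induced map is the identity for (2), and functoriality $(\psi\circ\phi)_*=\psi_*\circ\phi_*$ gives (3); your explicit tracking of the coefficient module $\psi^*M$ in (3) is a detail the paper leaves implicit.
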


\begin{proof}
The proof is identical to that of Proposition~\ref{basiccd}, replacing cohomology by homology.
\end{proof}

\section{Main Properties of the Cohomological Dimension}

In this section we establish the fundamental properties of the cohomological dimension of Lie algebra homomorphisms. These results are natural analogues of corresponding properties for group homomorphisms and constitute the basic structural theory of the invariant introduced in the previous section. Since the proofs in the homological setting are entirely analogous, we restrict our attention to cohomological dimension throughout this section.

\subsection{Invariance under the image}

Our first result shows that the cohomological dimension of a homomorphism depends only on its image.

\begin{theorem}\label{imageLie}
Let $\phi:\mathfrak g\longrightarrow\mathfrak h$ be a Lie algebra homomorphism, and let $\phi':\mathfrak g\longrightarrow\operatorname{Im}(\phi)=\mathfrak p$ be the induced epimorphism. Then
$$\cd(\phi)=\cd(\phi').$$
\end{theorem}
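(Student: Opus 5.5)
Write $\phi=\iota\circ\phi'$, where $\iota:\mathfrak p\hookrightarrow\mathfrak h$ is the inclusion of the image. The inequality $\cd(\phi)\le\cd(\phi')$ follows at once from Proposition~\ref{basiccd}(3). If $\phi^*=\phi'^*\circ\iota^*$ is nonzero on $H^n(\mathfrak h,M)$, then $\phi'^*$ is nonzero on $H^n(\mathfrak p,\iota^*M)$, where $\iota^*M$ is $M$ with the restricted $\mathfrak p$-action.

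For the reverse inequality, fix $n$ and a $\mathfrak p$-module $N$ with $\phi'^*:H^n(\mathfrak p,N)\to H^n(\mathfrak g,N)$ nonzero. I will transport $N$ to $\mathfrak h$ by coinduction, setting $M=\operatorname{Coind}_{\mathfrak p}^{\mathfrak h}N=\Hom_{U(\mathfrak p)}(U(\mathfrak h),N)$. Shapiro's Lemma (Theorem~\ref{ShapiroLie}) gives an isomorphism $H^n(\mathfrak h,M)\cong H^n(\mathfrak p,N)$. This isomorphism is the composite of restriction $\iota^*:H^n(\mathfrak h,M)\to H^n(\mathfrak p,M)$ with the map induced by the $\mathfrak p$-module evaluation $\varepsilon:M\to N$, $f\mapsto f(1)$. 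I will verify this description, either from the standard proof via $\Hom_{U(\mathfrak h)}(P_*,\Hom_{U(\mathfrak p)}(U(\mathfrak h),N))\cong\Hom_{U(\mathfrak p)}(P_*,N)$ for a projective $U(\mathfrak h)$-resolution $P_*$ of $K$, which stays projective over $U(\mathfrak p)$ since $U(\mathfrak h)$ is free over $U(\mathfrak p)$ by PBW, or by quoting naturality of Shapiro's isomorphism.

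Consider the square: $\phi^*:H^n(\mathfrak h,M)\to H^n(\mathfrak g,M)$, followed by $\varepsilon_*:H^n(\mathfrak g,M)\to H^n(\mathfrak g,N)$. Here $\varepsilon$ is a $\mathfrak g$-map because the $\mathfrak g$-actions on $M$ and $N$ both factor through $\phi'$ and $\varepsilon$ is $\mathfrak p$-linear. Naturality of cohomology in both variables gives
$$\varepsilon_*\circ\phi^*=\varepsilon_*\circ\phi'^*\circ\iota^*=\phi'^*\circ\varepsilon_*\circ\iota^*=\phi'^*\circ\mathrm{Sh},$$
where $\mathrm{Sh}:H^n(\mathfrak h,M)\to H^n(\mathfrak p,N)$ is the Shapiro isomorphism. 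Since $\mathrm{Sh}$ is surjective and $\phi'^*\neq0$, the right side is nonzero, so $\phi^*\ne0$ in degree $n$ for the $\mathfrak h$-module $M$. This yields $\cd(\phi')\le\cd(\phi)$.

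The main obstacle is the identification of Shapiro's isomorphism with $\varepsilon_*\circ\iota^*$, i.e.\ its compatibility with restriction along $\phi$. The coefficient-naturality step is formal once that is established. I would check it at the cochain level on the Chevalley--Eilenberg resolution $U(\mathfrak h)\otimes\Lambda^*\mathfrak h$, comparing it with $U(\mathfrak p)\otimes\Lambda^*\mathfrak p$ via the inclusion, which is a chain map over $\iota$.
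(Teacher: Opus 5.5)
Your proof is correct and follows the paper's route: the easy inequality comes from the factorization $\phi=\iota\circ\phi'$, and the reverse inequality from coinducing a witnessing $\mathfrak p$-module to $\mathfrak h$ and combining Shapiro's isomorphism, identified as $\varepsilon_*\circ\iota^*$, with naturality in the coefficients. Your version is slightly tighter than the paper's in two respects: working in an arbitrary degree $n$ rather than $k=\cd(\phi')$ also covers the case $\cd(\phi')=\infty$, and writing the identity $\varepsilon_*\circ\phi^*=\phi'^*\circ\mathrm{Sh}$ explicitly spells out the diagram chase.
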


\begin{proof}
Clearly by Proposition \ref{basiccd}, we obtain that $\cd(\phi')\ge \cd(\phi),$ since
$\phi=i\circ\phi',$ where $i:\mathfrak p\hookrightarrow\mathfrak h$
is the inclusion, and hence $\phi^*=\phi'^*\circ i^*.$

It remains to prove that $\cd(\phi')\le \cd(\phi).$

Suppose $\cd(\phi')=k.$ Then there exists a $\mathfrak p$-module $M$ such that
$\phi'^*:H^k(\mathfrak p,M)\longrightarrow H^k(\mathfrak g,M)$ is nonzero.

Consider the coinduced $\mathfrak h$-module $\Coind_{\mathfrak p}^{\mathfrak h}(M)=\operatorname{Hom}_{U(\mathfrak p)}(U(\mathfrak h),M).$

Let $\alpha:\operatorname{Hom}_{U(\mathfrak p)}(U(\mathfrak h),M)\longrightarrow M $ be the canonical $\mathfrak p$-module homomorphism defined by $\alpha(f)=f(1),$ where $1$ denotes the identity element of the universal enveloping algebra $U(\mathfrak h)$.

We obtain the commutative diagram

$$
\begin{tikzcd}
H^k(\mathfrak h,\Coind_{\mathfrak p}^{\mathfrak h}M)
\arrow[r,"i^*"]
&
H^k(\mathfrak p,\Coind_{\mathfrak p}^{\mathfrak h}M)
\arrow[r,"\alpha_*"]
\arrow[d,"\phi'^*"']
&
H^k(\mathfrak p,M)
\arrow[d,"\phi'^*"]
\\
&
H^k(\mathfrak g,\Coind_{\mathfrak p}^{\mathfrak h}M)
\arrow[r,"\alpha_*"]
&
H^k(\mathfrak g,M).
\end{tikzcd}
$$

By the Lie algebra version of Shapiro's Lemma (Theorem \ref{ShapiroLie}), the composition

$$
\alpha_*\circ i^*:
H^k(\mathfrak h,\Coind_{\mathfrak p}^{\mathfrak h}M)
\longrightarrow
H^k(\mathfrak p,M)
$$

is an isomorphism. Since
$\phi'^*:H^k(\mathfrak p,M)\longrightarrow H^k(\mathfrak g,M)$ is nonzero, it follows from the commutativity of the diagram that

$$\phi^*=\phi'^*\circ i^*:H^k(\mathfrak h,\Coind_{\mathfrak p}^{\mathfrak h}M)
\longrightarrow H^k(\mathfrak g,\Coind_{\mathfrak p}^{\mathfrak h}M)$$ is also nonzero.

Hence $\cd(\phi)\ge k=\cd(\phi'),$ which proves the reverse inequality. Therefore, $\cd(\phi)=\cd(\phi').$
\end{proof}

\subsection{Injective homomorphisms}

We next determine the cohomological dimension of injective homomorphisms.

\begin{proposition}\label{prop:injective}
Let $\phi:\mathfrak g\hookrightarrow\mathfrak h$ be a monomorphism of Lie algebras. Then
$\cd(\phi)=\cd(\mathfrak g).$
\end{proposition}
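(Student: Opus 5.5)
The plan is to prove the two inequalities $\cd(\phi)\le\cd(\mathfrak g)$ and $\cd(\phi)\ge\cd(\mathfrak g)$ separately. The first is already contained in Proposition~\ref{basiccd}(1). For the second, we identify $\mathfrak g$ with its image $\phi(\mathfrak g)\subseteq\mathfrak h$; since $\phi$ is injective, the induced epimorphism $\phi':\mathfrak g\to\operatorname{Im}(\phi)$ is an isomorphism. By Theorem~\ref{imageLie} we then have $\cd(\phi)=\cd(\phi')$, and $\cd(\phi')=\cd(\id_{\mathfrak g})=\cd(\mathfrak g)$ by Proposition~\ref{basiccd}(2). To justify that last step, note that cohomological dimension is invariant under composing with isomorphisms: Proposition~\ref{basiccd}(3) gives $\cd(\phi')\le\cd(\id)$, and since $\id=\phi'^{-1}\circ\phi'$, it also gives $\cd(\id)\le\cd(\phi')$.

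If we prefer not to pass through Theorem~\ref{imageLie}, we can run the Shapiro argument directly. Suppose $\cd(\mathfrak g)=k$ and choose a $\mathfrak g$-module $M$ with $H^k(\mathfrak g,M)\ne0$. Form $\Coind_{\mathfrak g}^{\mathfrak h}M=\Hom_{U(\mathfrak g)}(U(\mathfrak h),M)$ and the evaluation map $\alpha(f)=f(1)$, which is a $\mathfrak g$-module map. Theorem~\ref{ShapiroLie} says that
$$\alpha_*\circ\phi^*:H^k(\mathfrak h,\Coind_{\mathfrak g}^{\mathfrak h}M)\to H^k(\mathfrak g,M)$$
is an isomorphism. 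Its target is nonzero, so $\phi^*$ is nonzero in degree $k$, and therefore $\cd(\phi)\ge k$.

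The only real subtlety is confirming that the Shapiro isomorphism is indeed realized by restriction along the inclusion followed by $\alpha_*$, which is the standard description and the one already used in the proof of Theorem~\ref{imageLie}. The case $\cd(\mathfrak g)=\infty$ needs a little care. There we apply the same argument to each $n$ with $H^n(\mathfrak g,M_n)\ne0$ for some module $M_n$; this yields arbitrarily large $n$ with $\phi^*\ne0$, so $\cd(\phi)=\infty$ as well.
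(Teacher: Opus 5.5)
Your proposal is correct. The direct argument in your second paragraph is exactly the paper's proof. Take $M$ with $H^k(\mathfrak g,M)\neq 0$, pass to $\Coind_{\mathfrak g}^{\mathfrak h}M$, and use that the Shapiro isomorphism is the composite $\alpha_*\circ\phi^*$; this forces $\phi^*\neq 0$ in degree $k$. You also treat the case $\cd(\mathfrak g)=\infty$, which the paper passes over silently.

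Your primary route instead derives the statement as a formal corollary of Theorem~\ref{imageLie}. The isomorphism-invariance step is correctly extracted from Proposition~\ref{basiccd}(2),(3) via $\id_{\mathfrak g}=\phi'^{-1}\circ\phi'$ and $\phi'=\phi'\circ\id_{\mathfrak g}$. Given what is already proved, this route is shorter, but it is not logically independent. Specialized to $\mathfrak p=\phi(\mathfrak g)$ with $\phi'$ an isomorphism, the proof of Theorem~\ref{imageLie} is the very same Shapiro computation. So both routes rest on identifying the Shapiro isomorphism with restriction followed by evaluation at $1$.
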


\begin{proof}
By Proposition \ref{basiccd}, we already know that $\cd(\phi)\le \cd(\mathfrak g).$

Let $k=\cd(\mathfrak g).$ Then there exists a $\mathfrak g$-module $M$ such that $H^k(\mathfrak g,M)\neq 0.$

Consider the coinduced $\mathfrak h$-module $N=\Coind_{\mathfrak g}^{\mathfrak h}(M).$
By Shapiro's Lemma for Lie algebra cohomology, $H^k(\mathfrak h,N)\cong H^k(\mathfrak g,M).$
Hence
$H^k(\mathfrak h,N)\neq 0.$

Under this identification, the restriction map
$\phi^*:H^k(\mathfrak h,N)\longrightarrow H^k(\mathfrak g,N)$ corresponds to the map induced by the canonical evaluation homomorphism $N\longrightarrow M,$ which is precisely the Shapiro isomorphism (Theorem \ref{ShapiroLie}). Therefore $\phi^*$ is nonzero. It follows that $\cd(\phi)\ge k.$ Since $\cd(\phi)\le\cd(\mathfrak g)=k,$ we conclude that
$\cd(\phi)=\cd(\mathfrak g).$
\end{proof}

\subsection{Monotonicty Property}

\begin{proposition}\label{prop:pullback}
Let
$
\begin{tikzcd}
\mathfrak g' \arrow[r,"f'"] \arrow[d,hook,"i"'] &
\mathfrak h' \arrow[d,hook,"j"] \\
\mathfrak g \arrow[r,"f"'] &
\mathfrak h
\end{tikzcd}
$
be a pullback diagram of Lie algebras, where $i$ and $j$ are
monomorphisms and $\mathfrak g'=f^{-1}(\mathfrak h').$
Then
$\cd(f')\le \cd(f).$
\end{proposition}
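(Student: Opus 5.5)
Suppose $\cd(f')=k$. Then there is an $\mathfrak h'$-module $M$ with $f'^*:H^k(\mathfrak h',M)\to H^k(\mathfrak g',M)$ nonzero. I will show that the coinduced $\mathfrak h$-module $N=\Coind_{\mathfrak h'}^{\mathfrak h}M$ makes $f^*:H^k(\mathfrak h,N)\to H^k(\mathfrak g,N)$ nonzero, which gives $\cd(f)\ge k$. The argument follows the proof of Theorem~\ref{imageLie}, now combined with a base change along the pullback square.

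\textbf{Step 1: the commutative diagram.} Let $\alpha:N\to M$ be evaluation at $1$; it is an $\mathfrak h'$-map, hence also a $\mathfrak g'$-map via $f'$. Since $j\circ f'=f\circ i$, restriction in cohomology gives the commutative square
\[
\begin{tikzcd}
H^k(\mathfrak h,N)\arrow[r,"j^*"]\arrow[d,"f^*"'] & H^k(\mathfrak h',N)\arrow[r,"\alpha_*"]\arrow[d,"f'^*"] & H^k(\mathfrak h',M)\arrow[d,"f'^*"]\\
H^k(\mathfrak g,N)\arrow[r,"i^*"'] & H^k(\mathfrak g',N)\arrow[r,"\alpha_*"'] & H^k(\mathfrak g',M).
\end{tikzcd}
\]
By Shapiro's Lemma (Theorem~\ref{ShapiroLie}), the top row $\alpha_*\circ j^*$ is an isomorphism. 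So we can pick $x\in H^k(\mathfrak h,N)$ whose image in $H^k(\mathfrak h',M)$ is a class $y$ with $f'^*(y)\neq0$. Commutativity then gives
\[
\alpha_*\, i^*\, f^*(x)=f'^*(y)\neq 0,
\]
so $f^*(x)\neq0$ and $\cd(f)\ge k$.

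\textbf{Step 2: checking commutativity.} The left square commutes by functoriality of restriction applied to $f\circ i=j\circ f'$, with $N$ regarded as a $\mathfrak g'$-module in both ways. The right square commutes by naturality of restriction in the coefficient module. Both facts hold at the level of Chevalley--Eilenberg cochains.

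\textbf{Where the pullback hypothesis enters.} As written, the chase in Step 1 does not seem to need that $\mathfrak g'=f^{-1}(\mathfrak h')$, or even that $i$ is injective. That is suspicious, so the main obstacle is to check that nothing is hidden there. The worry is that one might want an identification of $N$, restricted to $\mathfrak g$, with $\Coind_{\mathfrak g'}^{\mathfrak g}M$, which would require a Mackey-type base change using the pullback condition. Step 1 avoids this because it never restricts $N$ to $\mathfrak g$ through anything but the square. If a gap appears, I would fall back on that base change: show that $U(\mathfrak g)\otimes_{U(\mathfrak g')}-$ interacts correctly with $U(\mathfrak h)\otimes_{U(\mathfrak h')}-$ using PBW bases adapted to $\mathfrak g'\subseteq\mathfrak g$ and $\mathfrak h'\subseteq\mathfrak h$, and the injectivity of $\mathfrak g/\mathfrak g'\to\mathfrak h/\mathfrak h'$ that follows from $\mathfrak g'=f^{-1}(\mathfrak h')$. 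I expect, though, that the direct diagram chase in Step 1 already suffices.
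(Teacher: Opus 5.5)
Your proof is correct and is essentially the paper's argument: coinduce $M$ to $N=\Coind_{\mathfrak h'}^{\mathfrak h}M$, use that $\alpha_*\circ j^*$ is the Shapiro isomorphism, and chase the square $f\circ i=j\circ f'$ (the paper simply merges your bottom row into $\beta_*=\alpha_*\circ i^*$). Your suspicion is also justified, and nothing is hidden: the paper's proof likewise never uses $\mathfrak g'=f^{-1}(\mathfrak h')$ or the injectivity of $i$, so the inequality holds for any commutative square with $j$ injective, and no Mackey-type base change is needed.
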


\begin{proof}
Suppose $\cd(f')=k.$ Then there exists a $\mathfrak h'$-module $M$ such that
$f'^*:H^k(\mathfrak h',M)\longrightarrow H^k(\mathfrak g',M)$ is nonzero.

Let $N=\Coind_{\mathfrak h'}^{\mathfrak h}(M)
=\operatorname{Hom}_{U(\mathfrak h')}(U(\mathfrak h),M).$
By Shapiro's Lemma (Theorem \ref{ShapiroLie}), the composition
$$H^k(\mathfrak h,N)\stackrel{j^*}{\longrightarrow}H^k(\mathfrak h',N)\stackrel{\alpha_*}{\longrightarrow}H^k(\mathfrak h',M)$$
is an isomorphism, where
$\alpha(f)=f(1).$

Since the square commutes, $f\circ i=j\circ f',$ we obtain the commutative diagram

$$
\begin{tikzcd}
H^k(\mathfrak h,N)
\arrow[r,"j^*"]
\arrow[d,"f^*"']
&
H^k(\mathfrak h',N)
\arrow[r,"\alpha_*"]
&
H^k(\mathfrak h',M)
\arrow[d,"f'^*"]
\\
H^k(\mathfrak g,N)
\arrow[rr,"\beta_*"']
&&
H^k(\mathfrak g',M),
\end{tikzcd}
$$
where
$\beta_*=\alpha_*\circ i^*:H^k(\mathfrak g,N)\longrightarrow H^k(\mathfrak g',M).$

The map $\beta_*$ is well defined since $N$, viewed as a $\mathfrak g$-module via $f$, restricts to a $\mathfrak g'$-module via $i$.

Since the top composition
$\alpha_*\circ j^*$ is an isomorphism and
$f'^*:H^k(\mathfrak h',M)\longrightarrow H^k(\mathfrak g',M)$ is nonzero, commutativity implies that 
$\beta_*\circ f^*:H^k(\mathfrak h,N)\longrightarrow H^k(\mathfrak g',M)$
is nonzero. Hence $f^*:H^k(\mathfrak h,N)\longrightarrow H^k(\mathfrak g,N)$
must itself be nonzero.

Therefore $\cd(f)\ge k=\cd(f'),$ proving the proposition.
\end{proof}

\subsection{Homomorphisms of cohomological dimension zero}

We conclude this section with a characterization of surjective homomorphisms of cohomological dimension zero.

\begin{theorem}\label{thm:cd0}
Let $\phi:\mathfrak g\twoheadrightarrow\mathfrak h$ be a surjective homomorphism of Lie algebras. Then the following are equivalent:
\begin{enumerate}
\item $\cd(\phi)=0$;
\item $\mathfrak h=0$;
\item $\phi$ is the zero homomorphism.
\end{enumerate}
\end{theorem}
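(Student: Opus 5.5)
The plan is to prove the cycle of implications $(2)\Leftrightarrow(3)$, $(2)\Rightarrow(1)$ and $(1)\Rightarrow(2)$.

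\textbf{The equivalence $(2)\Leftrightarrow(3)$.} This is immediate from surjectivity. If $\phi$ is zero, then $\mathfrak h=\operatorname{Im}(\phi)=0$. Conversely, any homomorphism into the zero Lie algebra is zero.

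\textbf{The implication $(2)\Rightarrow(1)$.} For $\mathfrak h=0$ the Chevalley--Eilenberg complex has $C^n(0,M)=\operatorname{Hom}(\Lambda^n 0,M)=0$ for $n\ge1$. Hence $H^n(0,M)=0$ for $n\geq 1$ and $\cd(\phi)\le 0$. To see that the supremum equals $0$, take $M=K$ with the trivial action. Then $\phi^*:H^0(0,K)=K\to H^0(\mathfrak g,K)=K$ is the identity on invariants, which is nonzero. So $\cd(\phi)=0$.

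\textbf{The implication $(1)\Rightarrow(2)$.} I argue by contraposition: assume $\mathfrak h\neq0$ and show $\cd(\phi)\ge1$. This needs two ingredients.

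First, for surjective $\phi$ the map $\phi^*:H^1(\mathfrak h,M)\to H^1(\mathfrak g,M)$ is injective for every $\mathfrak h$-module $M$. Recall that $H^1$ is derivations modulo inner derivations. Suppose $d:\mathfrak h\to M$ is a derivation such that $d\circ\phi$ is inner, say $d(\phi(x))=\phi(x)\cdot m$ for all $x\in\mathfrak g$. Since $\phi$ is onto, $d(y)=y\cdot m$ for all $y\in\mathfrak h$, so $d$ is already inner.

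Second, we need an $\mathfrak h$-module $M$ with $H^1(\mathfrak h,M)\neq0$. I expect this to be the main point, since using the trivial module $K$ fails when $\mathfrak h$ is perfect. To get around this, pick $0\neq x\in\mathfrak h$ and the one-dimensional subalgebra $\mathfrak a=Kx$. Then $H^1(\mathfrak a,K)=\Hom(\mathfrak a,K)\cong K\neq0$. Put $M=\Coind_{\mathfrak a}^{\mathfrak h}K$. Shapiro's Lemma (Theorem~\ref{ShapiroLie}) gives $H^1(\mathfrak h,M)\cong H^1(\mathfrak a,K)\neq0$; equivalently, one can use Proposition~\ref{prop:injective} applied to $\mathfrak a\hookrightarrow\mathfrak h$.

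Combining the two ingredients, $\phi^*:H^1(\mathfrak h,M)\to H^1(\mathfrak g,M)$ is injective on a nonzero group, hence nonzero. Therefore $\cd(\phi)\ge1$, which completes the contrapositive and the proof.
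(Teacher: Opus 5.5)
Your proof is correct, and it takes a different route from the paper. In fact your route is the one that actually works.

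The paper argues that for surjective $\phi$, Theorem~\ref{imageLie} gives $\cd(\phi)=\cd(\mathfrak h)$. It then asserts without proof that $\cd(\mathfrak h)>0$ whenever $\mathfrak h\neq 0$. The first step is unsound. When $\phi$ is already onto, $\phi'=\phi$, so Theorem~\ref{imageLie} says nothing about $\cd(\mathfrak h)$. The equality $\cd(\phi)=\cd(\mathfrak h)$ is also false for epimorphisms in general. For example, the surjection from the free Lie algebra $F(x,y)$ onto the two-dimensional abelian Lie algebra $K^2$ has $\cd(\phi)\le \cd(F(x,y))=1<2=\cd(K^2)$.

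You avoid this by working only in degree $1$.
\begin{itemize}
\item Injectivity of $\phi^*\colon H^1(\mathfrak h,M)\to H^1(\mathfrak g,M)$ for surjective $\phi$ is the inflation injectivity at the start of the five-term sequence, since $\ker\phi$ acts trivially on $M$. It is exactly the weaker fact the equivalence needs in place of the false equality.
\item Your coinduction from the line $Kx$, via Shapiro's Lemma, supplies the missing justification that a nonzero $\mathfrak h$ has a module $M$ with $H^1(\mathfrak h,M)\neq 0$. You rightly note that the trivial module fails for perfect $\mathfrak h$.
\end{itemize}

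If the paper's reduction were valid it would give the stronger conclusion $\cd(\phi)=\cd(\mathfrak h)$, but that conclusion is not true. Your argument proves only $\cd(\phi)\ge 1$ when $\mathfrak h\neq 0$, which is precisely what the theorem requires, and so it gives a complete proof.
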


\begin{proof}
Since $\phi$ is surjective, the Theorem \ref{imageLie} gives $\cd(\phi)=\cd(\mathfrak h).$
Therefore,
$$\cd(\phi)=0\iff \cd(\mathfrak h)=0.$$

We claim that $\cd(\mathfrak h)=0\iff \mathfrak h=0.$

Indeed, if $\mathfrak h=0$, then $H^n(0,M)=0$ $(n>0)$ for every $0$-module $M$, and hence $\cd(\mathfrak h)=0$.

Conversely, if $\mathfrak h\neq0$, then the cohomological dimension of
$\mathfrak h$ is positive. In particular, there exists an $\mathfrak h$-module $M$ and an integer $n>0$ such that $H^n(\mathfrak h,M)\neq0.$ Thus $\cd(\mathfrak h)>0$.

Hence, $\cd(\phi)=0\iff\mathfrak h=0.$ Finally, since $\phi$ is surjective,
$\mathfrak h=0\iff\phi$ is the zero homomorphism. Therefore the three statements are equivalent.
\end{proof}

\subsection{Chain homotopy characterization}

The following theorem can be seen analogue of Dranishnikov and De Saha Theorem \cite[Theorem 3.3]{DD} in the case of Lie algebra homomorphisms. 

\begin{theorem}\label{chainhomotopy}
Let $\phi:\mathfrak g\to\mathfrak h$ be a Lie algebra homomorphism with
$\cd(\phi)=n$, and let
$\phi_*:(P_*(\mathfrak g),\partial_*)\longrightarrow (P_*(\mathfrak h),\partial_*')$
be the chain map between projective resolutions of the trivial
$U(\mathfrak g)$-module $k$ and the trivial $U(\mathfrak h)$-module $k$
induced by $\phi$. Then $\phi_*$ is chain homotopic to a chain map 
$\psi_*:P_*(\mathfrak g)\longrightarrow P_*(\mathfrak h)$ such that $\psi_i=0,\qquad i>n.$
\end{theorem}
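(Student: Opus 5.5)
The plan is to follow the Dranishnikov--De Saha strategy: kill $\phi_*$ in degree $n+1$ using a single "universal" cohomology class, then extend by zero above $n$ and build the chain homotopy by hand. We may assume $n<\infty$. Regard every $P_i(\mathfrak h)$ as a $U(\mathfrak g)$-module by restriction along $U(\phi)$, so that $\phi_*$ is a chain map of $U(\mathfrak g)$-complexes over the identity of $k$. The first thing to record is how $\phi^*$ looks at the cochain level. For any $\mathfrak h$-module $M$, the map $\phi^*:H^m(\mathfrak h,M)\to H^m(\mathfrak g,M)$ is induced by precomposition
$$\operatorname{Hom}_{U(\mathfrak h)}(P_m(\mathfrak h),M)\to \operatorname{Hom}_{U(\mathfrak g)}(P_m(\mathfrak g),M),\qquad z\mapsto z\circ\phi_m .$$
This holds because both sides compute Chevalley--Eilenberg cohomology as $\operatorname{Ext}$ over the enveloping algebras, and the comparison map is unique up to homotopy.

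\emph{Step 1 (the universal cocycle).} Let $B_n=\operatorname{Im}(\partial'_{n+1})\subseteq P_n(\mathfrak h)$, an $\mathfrak h$-module. Then $\partial'_{n+1}$, viewed as the surjection $z:P_{n+1}(\mathfrak h)\to B_n$, is a cocycle, since $z\circ\partial'_{n+2}=0$. Because $\cd(\phi)=n$, we have $\phi^*[z]=0$ in $H^{n+1}(\mathfrak g,B_n)$. Hence there is a $U(\mathfrak g)$-map $c:P_n(\mathfrak g)\to B_n$ with $z\circ\phi_{n+1}=c\circ\partial_{n+1}$.

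\emph{Step 2 (the new chain map).} Define $\psi_i=\phi_i$ for $i<n$, $\psi_n=\phi_n-\iota c$ (where $\iota:B_n\hookrightarrow P_n(\mathfrak h)$), and $\psi_i=0$ for $i>n$. It remains to check that $\psi_*$ is a chain map.
\begin{itemize}
\item We have $\partial'_n\psi_n=\partial'_n\phi_n=\phi_{n-1}\partial_n$, since $\partial'_n$ kills $B_n$.
\item We have $\psi_n\partial_{n+1}=\phi_n\partial_{n+1}-c\partial_{n+1}=\partial'_{n+1}\phi_{n+1}-z\phi_{n+1}=0$.
\end{itemize}

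\emph{Step 3 (the homotopy).} Set $D_i=0$ for $i<n$. Since $P_n(\mathfrak g)$ is $U(\mathfrak g)$-projective and $\partial'_{n+1}:P_{n+1}(\mathfrak h)\to B_n$ is surjective, lift $c$ to $D_n:P_n(\mathfrak g)\to P_{n+1}(\mathfrak h)$ with $\partial'_{n+1}D_n=c$. This gives the homotopy identity in degree $n$: $\phi_n-\psi_n=\iota c=\partial'_{n+1}D_n+D_{n-1}\partial_n$.

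For degrees above $n$ we proceed inductively. Suppose $D_{n},\dots,D_{m-1}$ are built with $\phi_i-\psi_i=\partial' D_i+D_{i-1}\partial$ for $i\le m-1$. Then the map $\phi_m-D_{m-1}\partial_m$ (using $\psi_m=0$) lands in $\ker\partial'_m$. For instance, when $m=n+1$,
$$\partial'\phi_{n+1}-\partial' D_n\partial_{n+1}=c\partial_{n+1}-c\partial_{n+1}=0,$$
and the general case is the usual computation. By exactness of $P_*(\mathfrak h)$ in positive degrees, $\ker\partial'_m=\operatorname{Im}\partial'_{m+1}$. Projectivity of $P_m(\mathfrak g)$ then gives $D_m$ with $\partial'_{m+1}D_m=\phi_m-D_{m-1}\partial_m$, which completes the induction.

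Note that only projectivity of $P_*(\mathfrak g)$ over $U(\mathfrak g)$ and exactness of $P_*(\mathfrak h)$ are used. We never need the $P_i(\mathfrak h)$ to be projective over $U(\mathfrak g)$, which they generally are not.

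The main obstacle is the cochain-level identification in the first paragraph, namely that $\phi^*$ on $\operatorname{Ext}$ is literally given by precomposition with the induced chain map $\phi_*$. This identification is what lets the vanishing $\phi^*[z]=0$ produce the coboundary $c$ on the nose. I would justify it via the standard comparison theorem: any two chain maps over the identity between a projective complex and an acyclic one are homotopic, so any choice of $\phi_*$ computes the restriction map. Once this is in place, the rest is the routine lifting argument above.
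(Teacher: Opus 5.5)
Your proof is correct and is essentially the paper's argument: the paper also takes the universal cocycle $d'_{n+1}\colon P_{n+1}(\mathfrak h)\twoheadrightarrow K_n=\operatorname{Im}\partial'_{n+1}$, uses $\cd(\phi)=n$ to write $d'_{n+1}\phi_{n+1}=h_n\partial_{n+1}$, lifts $h_n$ to $s_n$, and builds $s_i$ for $i>n$ inductively from projectivity of $P_*(\mathfrak g)$ and exactness of $P_*(\mathfrak h)$. Your version is a little more complete than the paper's, since you explicitly define $\psi_n=\phi_n-\iota c$, check that $\psi_*$ is a chain map, and justify the cochain-level description of $\phi^*$; the paper leaves all three implicit.
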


\begin{proof}
For each integer $i\ge0$, let $K_i=\operatorname{Im}(\partial'_{i+1}),$ and denote by $d'_{i+1}:P_{i+1}(\mathfrak h)\twoheadrightarrow K_i$ the epimorphism obtained by restricting the boundary map
$\partial'_{i+1}$ to its image.

Consider the commutative diagram

\[
\begin{tikzcd}[column sep=large]
\cdots
\arrow[r]
&
P_{n+2}(\mathfrak g)
\arrow[r,"\partial_{n+2}"]
\arrow[d,"\phi_{n+2}"']
&
P_{n+1}(\mathfrak g)
\arrow[r,"\partial_{n+1}"]
\arrow[d,"\phi_{n+1}"']
&
P_n(\mathfrak g)
\arrow[r]
\arrow[d,"\phi_n"']
&
\cdots
\\
\cdots
\arrow[r]
&
P_{n+2}(\mathfrak h)
\arrow[r,"\partial'_{n+2}"]
\arrow[d,two heads,"d'_{n+2}"']
&
P_{n+1}(\mathfrak h)
\arrow[r,"\partial'_{n+1}"]
\arrow[d,two heads,"d'_{n+1}"']
&
P_n(\mathfrak h)
\arrow[r]
\arrow[d,two heads]
&
\cdots
\\
\cdots
\arrow[r]
&
K_{n+1}
\arrow[r]
&
K_n
\arrow[r]
&
K_{n-1}
\arrow[r]
&
\cdots
\end{tikzcd}
\]

Since $\partial'_{n+1}\partial'_{n+2}=0,$ the composition
$$P_{n+2}(\mathfrak h)\stackrel{\partial'_{n+2}}{\longrightarrow}
P_{n+1}(\mathfrak h)\stackrel{d'_{n+1}}{\longrightarrow}
K_n $$ is trivial. Consequently,$d'_{n+1}\in \Hom_{U(\mathfrak h)}(P_{n+1}(\mathfrak h),K_n)$ is an $(n+1)$-cocycle in the cochain complex
$\Hom_{U(\mathfrak h)}(P_*(\mathfrak h),K_n),$ and therefore determines a cohomology class $[d'_{n+1}]\in H^{n+1}(\mathfrak h,K_n).$

Since $\cd(\phi)=n$, the induced homomorphism
$$\phi^*:H^{n+1}(\mathfrak h,K_n)\longrightarrow
H^{n+1}(\mathfrak g,K_n)$$
is zero. Hence the cocycle
$d_n=d'_{n+1}\circ\phi_{n+1}:P_{n+1}(\mathfrak g)\longrightarrow K_n$
represents the zero cohomology class. Therefore there exists a
$U(\mathfrak g)$-homomorphism $h_n:P_n(\mathfrak g)
\longrightarrow K_n $ such that $h_n\partial_{n+1}=d_n.$

Since $P_n(\mathfrak g)$ is projective and $d'_{n+1}:
P_{n+1}(\mathfrak h)\twoheadrightarrow K_n$ is surjective, the lifting property of projective modules yields a homomorphism
$s_n:P_n(\mathfrak g)\longrightarrow P_{n+1}(\mathfrak h)$
satisfying
$d'_{n+1}s_n=h_n.$

It follows that
$$d'_{n+1}(\phi_{n+1}-s_n\partial_{n+1})=d'_{n+1}\phi_{n+1}
-d'_{n+1}s_n\partial_{n+1}=d_n-h_n\partial_{n+1}=0.$$

Hence $\operatorname{Im}(\phi_{n+1}-s_n\partial_{n+1})
\subseteq \ker(d'_{n+1}).$ Since $\ker(d'_{n+1})=\operatorname{Im}(\partial'_{n+2}),$
and $P_{n+1}(\mathfrak g)$ is projective, there exists a lift
$s_{n+1}:P_{n+1}(\mathfrak g)\longrightarrow P_{n+2}(\mathfrak h)$
such that
$\partial'_{n+2}s_{n+1}=\phi_{n+1}-s_n\partial_{n+1}.$

Repeating the above argument inductively, one constructs homomorphisms
$s_i:P_i(\mathfrak g)\longrightarrow P_{i+1}(\mathfrak h),$ $i\ge n,$ satisfying
$\partial'_{i+2}s_{i+1}+s_i\partial_{i+1}=\phi_{i+1}$ $(i\ge n).$
Thus the family $\{s_i\}_{i\ge n}$ forms a chain homotopy between
$\psi_i=0,$ $i>n,$ while $\psi_*$ remains chain homotopic to $\phi_*$. This completes the proof.
\end{proof}

The preceding theorem immediately yields the following comparison between the
homological and cohomological dimensions of a Lie algebra homomorphism.

\begin{proposition}\label{hdlecd}
Let $\phi:\mathfrak g\to\mathfrak h$ be a Lie algebra homomorphism. Then
$
\hd(\phi)\le\cd(\phi).
$
\end{proposition}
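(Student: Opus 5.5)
We deduce the inequality from Theorem~\ref{chainhomotopy} by tensoring the homotopy with an arbitrary coefficient module. If $\cd(\phi)=\infty$ there is nothing to prove, so assume $\cd(\phi)=n<\infty$. We must show that for every $\mathfrak h$-module $M$ and every $m>n$ the map $\phi_*:H_m(\mathfrak g,M)\to H_m(\mathfrak h,M)$ vanishes.

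Fix projective resolutions $P_*(\mathfrak g)\to K$ over $U(\mathfrak g)$ and $P_*(\mathfrak h)\to K$ over $U(\mathfrak h)$, with $\phi_*:P_*(\mathfrak g)\to P_*(\mathfrak h)$ a chain map over $\id_K$. Here $\phi_*$ is $U(\phi)$-semilinear, i.e.\ $U(\mathfrak g)$-linear when $P_*(\mathfrak h)$ is regarded as a $U(\mathfrak g)$-module via $U(\phi)$. Homology is computed by $M\otimes_{U(\mathfrak g)}P_*(\mathfrak g)$ and $M\otimes_{U(\mathfrak h)}P_*(\mathfrak h)$, where $M$ is made a right module via the antipode. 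The induced map on homology is the one induced by the chain map
\[
\id_M\otimes\phi_*:\ M\otimes_{U(\mathfrak g)}P_*(\mathfrak g)\to M\otimes_{U(\mathfrak h)}P_*(\mathfrak h),
\]
which is well defined because $\phi_*$ is semilinear. The same description computes the homology map when one uses Chevalley--Eilenberg complexes, since the Koszul resolutions $U(\mathfrak g)\otimes\Lambda^*\mathfrak g$ are projective and the comparison maps are unique up to homotopy.

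By Theorem~\ref{chainhomotopy}, $\phi_*$ is chain homotopic to a $\psi_*$ with $\psi_i=0$ for $i>n$. The homotopy $\{s_i\}$ constructed there consists of $U(\mathfrak g)$-linear maps $P_i(\mathfrak g)\to P_{i+1}(\mathfrak h)$, with $P_{i+1}(\mathfrak h)$ viewed as a $U(\mathfrak g)$-module via $\phi$. This linearity is needed, because the cocycle $d'_{n+1}\circ\phi_{n+1}$ lives in $\Hom_{U(\mathfrak g)}(P_{n+1}(\mathfrak g),K_n)$ with $K_n$ restricted along $\phi$. Hence $\id_M\otimes s_i$ is well defined and gives a chain homotopy between $\id_M\otimes\phi_*$ and $\id_M\otimes\psi_*$. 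The two chain maps therefore induce the same map on homology. Since $\id_M\otimes\psi_m=0$ for $m>n$, the map $\phi_*:H_m(\mathfrak g,M)\to H_m(\mathfrak h,M)$ is zero for all $m>n$ and all $M$. Thus $\hd(\phi)\le n=\cd(\phi)$.

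The main point to check is the compatibility of module structures: the chain map and the homotopy are $U(\mathfrak g)$-linear into restricted $U(\mathfrak h)$-modules, so tensoring with $M$ over the respective enveloping algebras is legitimate. This uses the identification $M\otimes_{U(\mathfrak g)}\operatorname{Res}X\to M\otimes_{U(\mathfrak h)}X$, given by $m\otimes x\mapsto m\otimes x$, for $\mathfrak h$-modules $M$ and $X$, which holds because $U(\phi)$ intertwines the module actions. We also need that the map on homology does not depend on the choice of resolutions or comparison map, which is the standard comparison theorem.
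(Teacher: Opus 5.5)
Your proof is correct and follows essentially the same route as the paper: you apply Theorem~\ref{chainhomotopy}, tensor the chain homotopy with the coefficient module, and use $\psi_k=0$ for $k>n$ to conclude that $\phi_*$ vanishes on $H_k$. Your treatment of the case $\cd(\phi)=\infty$, and your check that $\phi_*$ and the $s_i$ are $U(\phi)$-semilinear (which makes $\id_M\otimes s_i$ well defined), fill in details the paper leaves implicit.
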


\begin{proof}
Let $n=\cd(\phi)$, and let
$
\phi_*:(P_*(\mathfrak g),\partial_*)
\longrightarrow
(P_*(\mathfrak h),\partial_*')
$
be a chain map between projective resolutions of the trivial modules induced by
$\phi$.

By Theorem~\ref{chainhomotopy}, the chain map $\phi_*$ is chain homotopic to a
chain map
$
\psi_*:P_*(\mathfrak g)\longrightarrow P_*(\mathfrak h)
$
such that
$
\psi_k=0,\qquad k>n.
$

Now let $A$ be any $\mathfrak h$-module. Tensoring with $A$ over the universal
enveloping algebras yields chain maps
$
1_A\otimes\phi_*,\,
1_A\otimes\psi_*:
A\otimes_{U(\mathfrak g)}P_*(\mathfrak g)
\longrightarrow
A\otimes_{U(\mathfrak h)}P_*(\mathfrak h),
$
which are chain homotopic and therefore induce the same homomorphism in
homology.

Since $\psi_k=0$ for all $k>n$, the induced homomorphism
$
\phi_*:
H_k(\mathfrak g,A)
\longrightarrow
H_k(\mathfrak h,A)
$
is zero for every $k>n$. Hence,
$
\hd(\phi)\le n=\cd(\phi),
$
as required.
\end{proof}

\section{Examples}

In this section we illustrate the cohomological dimension of Lie algebra homomorphisms by considering two fundamental classes of Lie algebras: free Lie algebras and abelian Lie algebras. The free case shows that the invariant is particularly rigid, taking only the values $0$ and $1$, while in the abelian case it admits a complete algebraic description in terms of the rank of the homomorphism. 

\begin{theorem}
Let $\phi:F(X)\to F(Y)$ be a homomorphism of free Lie algebras.
Then $$\cd(\phi)\in\{0,1\}.$$
Moreover, if $\phi$ is not trivial, then $\cd(\phi)=1$.
\end{theorem}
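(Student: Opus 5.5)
The plan has two parts: an upper bound from the free Lie algebras themselves, and a lower bound in degree $1$ coming from the derivation description of $H^1$.

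\textbf{Upper bound.} I would use the standard fact that a free Lie algebra $F(Y)$ has cohomological dimension at most $1$. The enveloping algebra $U(F(Y))$ is the free associative algebra $T(V_Y)$ on the vector space $V_Y$ spanned by $Y$. Its augmentation ideal is a free left $U(F(Y))$-module on $Y$. This gives a projective resolution
$$0\longrightarrow U(F(Y))\otimes V_Y\longrightarrow U(F(Y))\longrightarrow K\longrightarrow 0$$
of length one, so $H^n(F(Y),M)=0$ for $n\ge2$. Proposition~\ref{basiccd}(1) then gives $\cd(\phi)\le\cd(F(Y))\le1$. Together with $\cd(\phi)\ge0$ (for $\phi$ the zero map and any $F(Y)$ the degree-zero part still counts), this yields $\cd(\phi)\in\{0,1\}$.

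\textbf{Lower bound when $\phi\neq0$.} I would identify $H^1(\mathfrak h,M)$ with derivations $\mathfrak h\to M$ modulo inner derivations, with $\phi^*$ given by precomposition $D\mapsto D\circ\phi$. So it suffices to find an $F(Y)$-module $M$ and a derivation $D:F(Y)\to M$ such that $D\circ\phi$ is not inner on $F(X)$.

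I would take $M=U(F(Y))$ with the left regular action, and $D$ the derivation that is the inclusion $F(Y)\hookrightarrow U(F(Y))$. Since $F(Y)$ is free, any map defined on $Y$ extends uniquely to a derivation; this one is $y\mapsto y$ extended. Then $D\circ\phi$ is inner only if there is $m\in U$ with $\phi(x)=\phi(x)\cdot m$ for all $x\in X$, since an inner derivation has the form $g\mapsto g\cdot m$. Because $T(V_Y)$ is a domain and some $\phi(x)$ is nonzero, this forces $m=1$ on that element. A careful check of the inner-derivation convention and of the map $g\mapsto g\cdot 1$ is needed at this point.

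The difficulty is that $g\mapsto g\cdot 1=g$ is itself inner, so this choice of $M$ fails. I would switch to the module $M=U\otimes U$ with action $g\cdot(a\otimes b)=ga\otimes b$, or more simply use the Fox-derivative description: $H^1(F(Y),M)\cong M^{Y}/\{(y\cdot m)_y\}$. With $M=U$ regular, the map $D_y$ sending $y_0\mapsto 1$ and all other generators to $0$ is non-inner, since $1\notin$ augmentation ideal. Pulled back along $\phi$, the class is inner iff there is $m$ with $\partial_{y_0}\phi(x)=\phi(x)m$ for all $x$. I would instead choose $y_0$ and $M$ so that degree considerations in the graded algebra $T(V_Y)$ rule this out. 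If $\phi(x)$ has a nonzero degree-$1$ component, a degree-$0$ Fox derivative $c\neq0$ cannot equal $\phi(x)m$, which has positive degree.

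The main obstacle is the case where every $\phi(x)$ lies in higher brackets. For that case I would first invoke Theorem~\ref{imageLie} to replace $\phi$ by $F(X)\to\operatorname{Im}\phi$. By the Shirshov--Witt theorem, $\operatorname{Im}\phi$ is a nonzero free Lie algebra, so $\phi'$ is a surjection onto a free Lie algebra $F(Z)$ with $Z\neq\emptyset$. A surjection onto $F(Z)$ splits, since we can lift the generators of $F(Z)$ to $F(X)$. Writing $\phi'\circ\sigma=\id$, Proposition~\ref{basiccd}(3) gives $1=\cd(F(Z))=\cd(\id)\le\cd(\phi')$. Here $\cd(F(Z))=1$ because $H^1(F(Z),U(F(Z)))\neq0$ by the degree argument above, applied to the identity map. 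Hence $\cd(\phi)=\cd(\phi')=1$. This splitting route avoids the obstacle entirely, and I would present it as the actual proof.
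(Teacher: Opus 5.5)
Your argument is correct, but it differs from the paper's at the decisive step.

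\textbf{Upper bound.} You do what the paper does: Proposition~\ref{basiccd}(1) together with $\cd(F(Y))\le1$. The only difference is that you derive $\cd(F(Y))\le1$ from the length-one resolution over $T(V_Y)$ instead of citing it.

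\textbf{Lower bound.} Both arguments use Theorem~\ref{imageLie} and Shirshov--Witt to pass to a surjection $\phi'\colon F(X)\twoheadrightarrow F(Z)$ with $Z\neq\emptyset$. The paper then invokes Theorem~\ref{thm:cd0}. You instead split $\phi'$ by lifting generators and obtain $1=\cd(F(Z))=\cd(\phi'\circ\sigma)\le\cd(\phi')$ from Proposition~\ref{basiccd}(2),(3). You check $\cd(F(Z))\ge1$ with an explicit non-inner derivation.

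\textbf{What your route buys.} Your version genuinely uses freeness of the image; in the paper, Shirshov--Witt does no work once Theorem~\ref{thm:cd0} is applied. It also sidesteps a step in the paper's proof of Theorem~\ref{thm:cd0}, which claims $\cd(\phi)=\cd(\mathfrak h)$ for every surjection. That claim fails in general: the abelianization $F(x,y)\twoheadrightarrow K^2$ has $\cd\le1<2=\cd(K^2)$. For split surjections the equality does hold, and that is precisely what you prove.

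\textbf{What the paper's route buys.} Once Theorem~\ref{thm:cd0} is justified directly, the paper's route needs no freeness of the image at all. For a surjection $\mathfrak g\twoheadrightarrow\mathfrak p$, inflation $H^1(\mathfrak p,M)\to H^1(\mathfrak g,M)$ is injective: if $D\circ\phi'$ is inner via $m$, surjectivity makes $D$ inner via $m$. And $H^1(\mathfrak p,M)\neq0$ for some $M$ whenever $\mathfrak p\neq0$, for example $M$ the augmentation ideal of $U(\mathfrak p)$.

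\textbf{Write-up.} Drop the false starts, namely the inclusion derivation and the Fox-derivative case split. Also, $\cd(F(Z))\ge1$ is already visible with trivial coefficients, since $H^1(F(Z),K)\cong\Hom_K(F(Z)/[F(Z),F(Z)],K)\neq0$.
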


\begin{proof}
    Since cohomological dimension of free Lie algebras is one \cite{Zu} and 
    by Proposition \ref{basiccd}, we obtain that $\cd(\phi)\leq 1.$ Moreover, the image of Lie algebra $F(X)$ is subalgebra of $F(Y)$.  
    The Shirshov-Witt Theorem \cite{Sh} asserts that every subalgebra of a free Lie algebra (over a field) is free. Hence, we may assume $\phi$ to be an epimorphism. By Theorem \ref{thm:cd0}, $\cd(\phi)=0$ if and only if $\phi$ is the trivial Lie epimorphism. 
\end{proof}

\begin{theorem}\label{AbelianLie}
Let $\phi:\mathfrak a\longrightarrow\mathfrak b$ be a homomorphism of finite-dimensional abelian Lie algebras. Then 
$$\cd(\phi)=\operatorname{rank}(\phi)=\dim(\operatorname{Im}\phi).$$
\end{theorem}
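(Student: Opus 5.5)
By Theorem \ref{imageLie}, $\cd(\phi)=\cd(\phi')$, where $\phi':\mathfrak a\twoheadrightarrow\mathfrak p=\operatorname{Im}\phi$ is the induced epimorphism. The image $\mathfrak p$ is an abelian Lie algebra of dimension $r=\operatorname{rank}(\phi)$. It therefore suffices to prove $\cd(\phi')=r$ for a surjection of finite-dimensional abelian Lie algebras $\phi':\mathfrak a\twoheadrightarrow\mathfrak p$ with $\dim\mathfrak p=r$.

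\emph{Upper bound.} Proposition \ref{basiccd} gives $\cd(\phi')\le\cd(\mathfrak p)$, so it is enough to know $\cd(\mathfrak p)\le r$. For this I use the Chevalley--Eilenberg complex: $C^n(\mathfrak p,M)=\Hom_K(\Lambda^n\mathfrak p,M)$ vanishes for $n>r$. Hence $H^n(\mathfrak p,M)=0$ for $n>r$ and every $\mathfrak p$-module $M$.

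\emph{Lower bound.} Since we are over a field, the surjection $\phi'$ of vector spaces has a linear section $\sigma:\mathfrak p\to\mathfrak a$ with $\phi'\sigma=\id$. Because both algebras are abelian, any linear map between them is a Lie algebra homomorphism, so $\sigma$ is a Lie monomorphism. Then $\phi'\circ\sigma=\id_{\mathfrak p}$, and Proposition \ref{basiccd}(2),(3) give
$$\cd(\mathfrak p)=\cd(\id_{\mathfrak p})\le\min\{\cd(\sigma),\cd(\phi')\}\le\cd(\phi').$$
It remains to show $\cd(\mathfrak p)\ge r$. Take the trivial module $M=K$. All Chevalley--Eilenberg differentials vanish for an abelian Lie algebra with trivial coefficients, since both the bracket terms and the action terms are zero. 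Hence $H^r(\mathfrak p,K)\cong\Hom_K(\Lambda^r\mathfrak p,K)\cong K\neq0$. This gives $\cd(\mathfrak p)=r$, and therefore $\cd(\phi)=\cd(\phi')=r=\dim\operatorname{Im}\phi$.

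The case $r=0$ is consistent: then $\phi$ is zero and Theorem \ref{thm:cd0} gives $\cd(\phi)=0$.

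The only step needing care is the vanishing of the differentials, which rests on $\mathfrak p$ being abelian and $K$ being trivial. Alternatively, the splitting argument can be replaced by a direct computation: $\phi'^*$ on $H^r(-,K)$ is the map $\Lambda^r(\phi')^{\vee}:\Lambda^r\mathfrak p^{\vee}\to\Lambda^r\mathfrak a^{\vee}$. This map is injective because $(\phi')^{\vee}$ is injective, and exterior powers preserve injections over a field. Either route works; the splitting route is the one I intend to write up.
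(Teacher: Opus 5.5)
Your proof is correct, but it is organized differently from the paper's. The paper first writes $\phi=\phi_1\oplus g\oplus h$ along $\mathfrak a=\ker\phi\oplus V$ and $\mathfrak b=\operatorname{Im}\phi\oplus W$ and reads off $\cd(\phi)=\cd(\phi_1)=j$. That step tacitly assumes a rule for $\cd$ of direct sums of homomorphisms, which is never proved. It then checks the claim directly in $H^*(\mathfrak b,K)\cong\Lambda(\mathfrak b^*)$. The lower bound comes from $\phi^*(y_1^*\wedge\cdots\wedge y_j^*)=x_1^*\wedge\cdots\wedge x_j^*\neq0$, and an ``upper bound'' from the vanishing of $\Lambda^r(\phi^*)$ for $r>j$. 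That last step only sees the trivial module $K$, whereas $\cd(\phi)$ is a supremum over all $\mathfrak b$-modules, so by itself it does not give $\cd(\phi)\le j$.

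Your bound $\cd(\phi)\le\cd(\phi')\le\cd(\operatorname{Im}\phi)\le r$, coming from $C^n(\mathfrak p,M)=\Hom_K(\Lambda^n\mathfrak p,M)=0$ for $n>r$, works for every coefficient module. Your section argument is precisely what makes the paper's decomposition idea rigorous.

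What the paper's direct computation buys is an explicit nonzero class and no use of Shapiro's Lemma. You invoke Theorem \ref{imageLie}, whose nontrivial half rests on Shapiro, but only for the lower bound, and even there it can be bypassed. The projection $\pi:\mathfrak b\to\operatorname{Im}\phi$ is a Lie homomorphism because $\mathfrak b$ is abelian, and $\pi\circ\phi\circ\sigma=\id_{\mathfrak p}$. Proposition \ref{basiccd}(2),(3) then give $r=\cd(\mathfrak p)\le\cd(\phi)$ directly.

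Your argument also never uses finite-dimensionality of $\mathfrak a$ or $\mathfrak b$, so it proves Theorem \ref{AbelianGeneral} as well. One small slip: Theorem \ref{thm:cd0} is about surjections, so in your $r=0$ remark it applies to $\phi'$ rather than $\phi$. Your general argument already covers $r=0$ anyway.
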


\begin{proof}
Let $j=\dim(\operatorname{Im}\phi).$ Since $\mathfrak a$ and $\mathfrak b$ are vector spaces, there exist decompositions $\mathfrak a=\ker(\phi)\oplus V,$
and $\mathfrak b=\operatorname{Im}(\phi)\oplus W,$ where $\dim V=j$. Moreover,
$\phi=\phi_1\oplus g\oplus h,$ where $\phi_1:V\longrightarrow\operatorname{Im}(\phi)$ is an isomorphism,
$g:\ker(\phi)\longrightarrow0,$ and $h:0\longrightarrow W.$
Since $\phi_1$ is an isomorphism, $\cd(\phi_1)=j,$ while clearly
$\cd(g)=0,$ $\cd(h)=0.$
Therefore, $\cd(\phi)=j.$

For completeness we also verify this directly from
Chevalley--Eilenberg cohomology.

Since $\mathfrak a$ and $\mathfrak b$ are abelian,
$H^*(\mathfrak a,K)\cong\Lambda(\mathfrak a^*),$ $H^*(\mathfrak b,K)
\cong\Lambda(\mathfrak b^*),$ where $\Lambda(-)$ denotes the exterior algebra.

Choose bases $x_1,\ldots,x_n$ of $\mathfrak a$ and $y_1,\ldots,y_m$ of $\mathfrak b$ such that $\phi(x_i)=y_i,$ $1\le i\le j,$
and $\phi(x_i)=0,$ $i>j.$

Let $y_1^*,\ldots,y_m^*$ be the dual basis of $\mathfrak b^*$.
Then $\phi^*(y_i^*)=x_i^*,$ $1\le i\le j,$ and $\phi^*(y_i^*)=0,$ $i>j.$

Consequently, $\phi^*(y_1^*\wedge\cdots\wedge y_j^*)=x_1^*\wedge\cdots\wedge x_j^*\neq0.$

Hence $\cd(\phi)\ge j.$ On the other hand, $\Lambda^r(\operatorname{Im}\phi^*)=0$ for every $r>j,$ because
$\dim(\operatorname{Im}\phi^*)=j.$ Therefore,

$$\phi^*:H^r(\mathfrak b,K)\longrightarrow H^r(\mathfrak a,K)$$
is identically zero for every $r>j$.
Thus
$\cd(\phi)\le j.$

Combining the two inequalities yields

$$\cd(\phi)=j=\dim(\operatorname{Im}\phi)=\operatorname{rank}(\phi).$$
\end{proof}

Observe that the finite-dimensionality assumption on the Lie algebras in Theorem \ref{AbelianLie} is stronger than necessary. Indeed, it suffices to assume that the image of the homomorphism is finite-dimensional. We record this more general statement below.

\begin{theorem}\label{AbelianGeneral}
Let $\phi:\mathfrak a\longrightarrow\mathfrak b$ be a homomorphism of abelian Lie algebras over a field $K$. Suppose that $\operatorname{Im}(\phi)$ is finite-dimensional. Then
$$\cd(\phi)=\operatorname{rank}(\phi)=\dim(\operatorname{Im}\phi).$$
\end{theorem}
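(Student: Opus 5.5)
We reduce to the surjective case using Theorem~\ref{imageLie}. Write $\mathfrak p=\operatorname{Im}(\phi)$, which is abelian of finite dimension $j$, and let $\phi':\mathfrak a\twoheadrightarrow\mathfrak p$ be the induced epimorphism. Then $\cd(\phi)=\cd(\phi')$, so it suffices to show $\cd(\phi')=j$. Since $\mathfrak a$ is a vector space, choose a complement to get $\mathfrak a=\ker\phi'\oplus V$ with $\phi'|_V:V\to\mathfrak p$ an isomorphism. Let $s:\mathfrak p\to\mathfrak a$ be the inverse of $\phi'|_V$ followed by the inclusion. Because both algebras are abelian, $s$ is a Lie algebra homomorphism and $\phi'\circ s=\id_{\mathfrak p}$.

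\textbf{Lower bound.} Proposition~\ref{basiccd}(3) and (2) give $j=\cd(\mathfrak p)=\cd(\id_{\mathfrak p})=\cd(\phi'\circ s)\le\cd(\phi')$. This requires $\cd(\mathfrak p)=j$ for an abelian Lie algebra of dimension $j$. We get it from the Koszul (Chevalley--Eilenberg) resolution: it has length $j$, which gives $\cd(\mathfrak p)\le j$, and $H^j(\mathfrak p,K)\cong\Lambda^j(\mathfrak p^*)\neq0$, which gives $\cd(\mathfrak p)\ge j$. Alternatively, for a direct witness take $M=K$: then $\phi'^*(y_1^*\wedge\cdots\wedge y_j^*)\neq0$ in $\Lambda^j(\mathfrak a^*)\subseteq H^j(\mathfrak a,K)$, exactly as in the proof of Theorem~\ref{AbelianLie}. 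This holds because restriction along $s$ sends it back to itself.

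\textbf{Upper bound.} Proposition~\ref{basiccd}(1) gives $\cd(\phi')\le\cd(\mathfrak p)=j$. This bound holds for arbitrary $\mathfrak p$-modules $M$, not only for trivial coefficients, since the Chevalley--Eilenberg complex $C^*(\mathfrak p,M)=\Hom_K(\Lambda^*\mathfrak p,M)$ vanishes in degrees above $j$. This also avoids any finiteness assumption on $\mathfrak a$ or $\mathfrak b$, because only $\dim\mathfrak p<\infty$ is used.

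\textbf{Main obstacle.} The only delicate point is that in infinite dimensions the argument of Theorem~\ref{AbelianLie} via $\Lambda(\mathfrak a^*)$ is unreliable: the identification $H^*(\mathfrak a,K)\cong\Lambda(\mathfrak a^*)$ can fail when $\mathfrak a$ is infinite-dimensional. The splitting argument above sidesteps this by never computing cohomology of $\mathfrak a$ or $\mathfrak b$. It uses only Theorem~\ref{imageLie}, the section $s$, and the computation of $\cd$ for the finite-dimensional abelian algebra $\mathfrak p$. The existence of complements, and hence of $s$, uses a basis of $\mathfrak a$, which relies on the axiom of choice in general.
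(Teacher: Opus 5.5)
Your proof is correct, and it takes a different route from the paper's.

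\textbf{The two routes.} The paper argues directly for $\phi$ with trivial coefficients. It identifies $H^*(\mathfrak a,K)\cong\Lambda(\mathfrak a^*)$ and $H^*(\mathfrak b,K)\cong\Lambda(\mathfrak b^*)$. It gets the lower bound from the explicit class $\eta_1\wedge\cdots\wedge\eta_r$, and the upper bound from the vanishing of $\Lambda^n(\phi^*)$ for $n>r$. You instead pass to the epimorphism $\phi'\colon\mathfrak a\to\mathfrak p$ via Theorem~\ref{imageLie}. You get the lower bound from the section $s$ and Proposition~\ref{basiccd}(2),(3), and the upper bound from $\cd(\phi')\le\cd(\mathfrak p)\le j$. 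So everything reduces to the finite-dimensional algebra $\mathfrak p$.

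\textbf{What your route buys.} First, your upper bound holds for every coefficient module. The paper only checks that $\phi^*$ vanishes on $H^n(\mathfrak b,K)$ for $n>r$, and that alone does not bound a supremum over all $\mathfrak b$-modules. (It could be repaired at the cochain level: on $C^n(\mathfrak b,M)=\Hom_K(\Lambda^n\mathfrak b,M)$ the map $\phi^*$ is precomposition with $\Lambda^n\phi$, which factors through $\Lambda^n\mathfrak p=0$.) Second, as you point out, you never use $H^n(\mathfrak a,K)\cong\Lambda^n(\mathfrak a^*)$. That identification fails for $n\ge 2$ when $\mathfrak a$ is infinite-dimensional: there $H^n(\mathfrak a,K)=(\Lambda^n\mathfrak a)^*$ contains $\Lambda^n(\mathfrak a^*)$ only as a proper subspace. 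This is exactly the case the theorem adds beyond Theorem~\ref{AbelianLie}.

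\textbf{What the paper's route buys.} It gives an explicit witness on $\mathfrak b$ itself with $M=K$, and it avoids the Shapiro-lemma half of Theorem~\ref{imageLie}. Its lower bound still holds in infinite dimensions, because $\Lambda^r(\mathfrak a^*)\to(\Lambda^r\mathfrak a)^*$ is injective.

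A minor point: building $s$ needs no axiom of choice. You only have to lift a finite basis of $\mathfrak p$, so the complement $V$ is unnecessary.
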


\begin{proof}
Let $r=\operatorname{rank}(\phi)=\dim(\operatorname{Im}\phi).$

Since $\mathfrak a$ and $\mathfrak b$ are abelian Lie algebras, their
Chevalley--Eilenberg differentials vanish. Hence
$$
H^*(\mathfrak a,K)\cong\Lambda(\mathfrak a^*),
\qquad
H^*(\mathfrak b,K)\cong\Lambda(\mathfrak b^*),
$$
where $\Lambda(-)$ denotes the exterior algebra.

The induced homomorphism $\phi^*:\mathfrak b^*\longrightarrow\mathfrak a^*$
is the dual of $\phi$. Since $\operatorname{rank}(\phi^*)=\operatorname{rank}(\phi)=r,$ its image is an $r$-dimensional subspace of $\mathfrak a^*$.

Choose linearly independent elements
$\eta_1,\ldots,\eta_r\in\mathfrak b^*$ whose images $\phi^*(\eta_1),\ldots,\phi^*(\eta_r)$ form a basis of $\operatorname{Im}(\phi^*)$.

Then $\phi^*(\eta_1\wedge\cdots\wedge\eta_r)
=\phi^*(\eta_1)\wedge\cdots\wedge\phi^*(\eta_r).$ Since the vectors
$\phi^*(\eta_1),\ldots,\phi^*(\eta_r)$ are linearly independent, their exterior product is nonzero. Therefore,
$\phi^*(\eta_1\wedge\cdots\wedge\eta_r)\neq0,$ which implies that
$\cd(\phi)\ge r.$

On the other hand, every element of $H^n(\mathfrak b,K)
=\Lambda^n(\mathfrak b^*)$ is a linear combination of decomposable tensors. Since $\operatorname{Im}(\phi^*)$ has dimension $r$, every exterior product of more than $r$ elements of $\operatorname{Im}(\phi^*)$ vanishes. Consequently,
$\Lambda^n(\phi^*):\Lambda^n(\mathfrak b^*)
\longrightarrow\Lambda^n(\mathfrak a^*)$
is the zero homomorphism for every $n>r$. Equivalently, 
$$ \phi^*:H^n(\mathfrak b,K)\longrightarrow H^n(\mathfrak a,K)$$
is zero for every $n>r$. Hence
$\cd(\phi)\le r.$

Combining the two inequalities yields
$$\cd(\phi)=r=\operatorname{rank}(\phi)=\dim(\operatorname{Im}\phi).$$
\end{proof}

\def\bibname{\vspace*{-30mm}{

\end{document}